\numberwithin{equation}{section}
\theoremstyle{definition}
\newtheorem{definition}{Definition}[section]
\theoremstyle{remark}
\newtheorem{remark}[definition]{Remark}
\theoremstyle{plain}
\newtheorem{theorem}[definition]{Theorem}
\newtheorem{result}[definition]{Result}
\newtheorem{lemma}[definition]{Lemma}
\newtheorem{proposition}[definition]{Proposition}
\newtheorem{example}[definition]{Example}
\newcommand{\eps}{\varepsilon}
\newcommand{\zt}{\zeta}
\newcommand{\tht}{\theta}
\newcommand{\bdy}{\partial}
\newcommand{\OM}{\Omega}
\newcommand{\D}{\mathbb{D}}
\newcommand{\smoo}{\mathcal{C}}
\newcommand{\hol}{\mathcal{O}}
\newcommand\leb[1]{\mathbb{L}^{{#1}}}
\newcommand{\uni}{{\sf U}}
\newcommand{\re}{{\sf Re}}
\newcommand{\im}{{\sf Im}}
\newcommand{\bcdot}{\boldsymbol{\cdot}}
\newcommand\cis[1]{e^{i{#1}}}
\newcommand{\lrarw}{\longrightarrow}
\newcommand{\poi}{{\sf p}}
\newcommand\flt[1]{\varPsi_{{#1}}}
\newcommand{\z}{z^{\raisebox{-1pt}{$\scriptstyle {\prime}$}}}
\newcommand{\V}{V^{\raisebox{-1pt}{$\scriptstyle {\prime}$}}}
\newcommand\pth[1]{\gamma^{\raisebox{-2pt}{$\scriptstyle{{#1}}$}}}
\newcommand\crux[1]{\left[\log\left(\frac{C}{{#1}}\right)\right]^{-1/\alpha}}
\newcommand\CruxII[2]{\left[\log\left(\frac{{#2}}{{#1}}\right)\right]^{-1/\alpha}}
\newcommand{\suppmf}{\boldsymbol{{\sf M}}}
\newcommand\bv[1]{{#1}^{\bullet}}
\newcommand\omo[1]{\omega_{{#1}}}
\newcommand\cnj[1]{\widetilde{{#1}}}
\newcommand\pw[1]{\text{\small ${#1}$}}
\newcommand{\Cn}{\mathbb{C}^n}
\newcommand{\C}{\mathbb{C}} 
\newcommand{\R}{\mathbb{R}}
\newcommand{\N}{\mathbb{N}}
\begin{document}

\title[Complex geodesics, Hardy--Littlewood, etc.]{Complex geodesics, their boundary regularity, \\
 and a Hardy--Littlewood-type lemma}

\author{Gautam Bharali}
\address{Department of Mathematics, Indian Institute of Science, Bangalore 560012, India}
\email{bharali@math.iisc.ernet.in}

\thanks{This work is supported in part by a UGC Centre for Advanced Study grant. \\
${ \; }\;\,{ \; }$ To appear in {\bf{\em Ann. Acad. Sci. Fennic{\ae}.\,Math}}}

\keywords{Boundary regularity, complex geodesics, Hardy--Littlewood lemma}
\subjclass[2010]{Primary: 30H05, 32H40; Secondary: 32F45}

\begin{abstract}
{We begin by giving an example of a smoothly bounded convex domain that has complex geodesics
that do not extend continuously up to $\bdy\D$. This example suggests that continuity at the
boundary of the complex geodesics of a convex domain $\OM\Subset \Cn$, $n\geq 2$, is affected by
the extent to which $\bdy\OM$ curves or bends at each boundary point. We provide a sufficient condition
to this effect (on $\smoo^1$-smoothly bounded convex domains), which admits domains having boundary points
at which the boundary is infinitely flat. Along the way, we establish a Hardy--Littlewood-type lemma
that might be of independent interest.}
 
\end{abstract}
\maketitle

\section{Introduction}\label{S:intro}

Let $\OM$ be a bounded domain in $\Cn$ and let $\D$ denote the open unit disc centered
at $0\in \C$. A holomorphic map $f: \D\to \OM$ is called a {\em complex geodesic} of
$\OM$ if it is an isometry for the Kobayashi distances on $\D$ and $\OM$ (since $\OM$ is bounded, the
Kobayashi pseudodistance on $\OM$ is a true distance). These objects provide the primary motivation for this
work. Along the way, we prove a result in one complex variable that arose from our need for a type of
Hardy--Littlewood lemma on $\D$. Since the latter
topic is familiar to a large number of readers, we defer its discussion to Section~\ref{S:HL}.
\smallskip 

A fundamental theorem about the existence of complex geodesics is the following result:

\begin{result}[Lempert, \cite{lempert:mKrdb81}]\label{R:lempert}
Let $\OM$ be a bounded strictly convex domain in $\Cn$ with $\smoo^3$-smooth boundary.
\begin{enumerate}
 \item[$a)$] Given any two distinct points $z_1, z_2\in \OM$, there exists a complex geodesic
 of $\OM$ whose image contains $z_1$ and $z_2$.
 
 \item[$b)$] If, furthermore, $\OM$ is strongly convex, then every complex geodesic
 $f: \D\to \OM$ extends to a map of class $\smoo^1(\overline{\D})$.
\end{enumerate}
\end{result}

\noindent{A domain $\OM$ is said to be {\em strictly convex} if for any two points $z_1, z_2\in
\overline\OM$ the open segment $\{tz_1+(1-t)z_2: 0 < t < 1\}\subset \OM$. The strongly
convex domains form a proper subclass of the class of strictly convex domains: a convex
domain $\OM$ is said to be {\em strongly convex} if it has a $\smoo^2$-smooth boundary
and the second fundamental form of $\bdy\OM$ is strictly positive definite.}
\smallskip

The analogue of part\,$(a)$ of Result~\ref{R:lempert} for all convex domains in general was
established by Royden and Wong \cite{roydenWong:CKmcd83}. The
Royden--Wong extension of Lempert's theorem does not, however, make any assertions
about the boundary regularity of complex geodesics. 
\smallskip

One {\em cannot}, in general, expect the complex geodesics of a convex domain to extend
even continuously up to $\bdy\D$. To see this, consider the polydisc $\D^n$, $n\geq 2$: it is
easy to see that $f = (f_1,\dots,f_n): \D\to \D^n$ is a complex geodesic if and only if
at least one of $f_1,\dots, f_n$ is an automorphism of $\D$. By choosing any one map
among $f_1,\dots, f_n$ to be such that it does not extend continuously up to $\bdy\D$,
we see that complex geodesics do not, in general, extend continuously up to $\bdy\D$.
This example might lead one to suspect that the non-smoothness of $\bdy\OM$ is 
the chief reason that a complex geodesic does not extend
continuously up to $\bdy\D$. However, non-smoothness of $\bdy\OM$ is not the relevant
issue, as the following example shows:

\begin{example}
An example of a bounded convex domain with $\smoo^\infty$-smooth boundary having
complex geodesics that do not extend continuously to $\bdy\D$.
\end{example}

\noindent{Consider the complex geodesic of $\D^2$, $f=(f_1,f_2)$, where $f_1$ is an
automorphism of $\D$ and $f_2$ is a bounded holomorphic function with $|f_2| < 1/2$
that {\em does not} extend continuously up to $\bdy\D$. Let $\OM$ be any convex
domain having $\smoo^\infty$-smooth boundary such that 
$\D\times D(0;1/2)\subset \OM\varsubsetneq \D^2$. $K_{G}$ will denote
the Kobayashi distance on the domain $G$ and $\poi$ the Poincar{\'e} metric. Then: \vspace{-1mm}
\begin{align*}
 \poi(\zt_1,\zt_2)\,&=\,K_{\D^2}(f(\zt_1), f(\zt_2)) \\
 				&\leq\,K_{\OM}(f(\zt_1), f(\zt_2))\,\leq\,\poi(\zt_1,\zt_2) \quad
 				\forall \zt_1, \zt_2\in \D. \vspace{-1mm}
\end{align*}
The equality above encodes the fact that $f$ is a complex geodesic of $\D^2$ while the
first inequality is the distance-decreasing effect of the inclusion that maps $\OM\hookrightarrow \D^2$ .
Thus, $f$ is a complex geodesic of $\OM$, but it does not extend continuously up to
$\bdy\D$. \hfill $\blacktriangleleft$}
\vspace{1mm}

This example suggests that the property of a convex domain $\OM$ that affects the boundary
behaviour of {\em generic} complex geodesics is the flatness of $\bdy\OM$ or the extent to which $\bdy\OM$
curves or bends at each boundary point. This notion is supported by part\,$(b)$ of Result~\ref{R:lempert}.
(We emphasize the word ``generic'' here because even in domains with rough boundaries
there may exist points $z_1$ and $z_2$ in special position, consider $\D^2$ for instance, such that some
complex geodesic containing
them extends continuously up to $\bdy\D$. We will not address this type of non-generic phenomena in
this work.) Our notion is further supported  by a result of Mercer \cite[Proposition~2.9]{mercer:cgihmcd},
which states that all complex geodesics of a bounded $m$-convex domain\,---\,see Definition~2.7
in \cite{mercer:cgihmcd}\,---\,extend to maps that are H{\"o}lder-continuous on
$\overline\D$ (where the H{\"o}lder exponent depends on the parameter $m$).
\smallskip

If a domain $\OM$ is smoothly bounded and $m$-convex, then for each
$w\in \bdy\OM$ the (complex) order of contact of the complex line
$w + \C{v}$, for each $v\in H_w(\bdy\OM) := 
T_w(\bdy\OM)\cap iT_w(\bdy\OM), \ v\neq 0$, with $\bdy\OM$ at $w$ is at most $m$.
In contrast, we will show that all complex geodesics of
$\OM$ extend continuously up to $\bdy\D$ even if there are points $w\in \bdy\OM$ at which
$w + \C{v}$, for
some $v\in H_w(\bdy\OM)$, osculates $\bdy\OM$ to {\em infinite} (complex) order, provided
$\bdy\OM$ exhibits some degree of bending in the complex-tangential directions. To make this precise
when $\bdy\OM$ is merely $\smoo^1$-smooth requires a little effort. To this end, we
need the following definition. (Henceforth, $B^{d}(a;r)$ will denote the Euclidean
ball in $\C^d$ with centre $a$ and radius $r$.)

\begin{definition}\label{D:supp}
Let $\OM$ be a bounded convex domain in $\Cn, \ n\geq 2$, with $\smoo^1$-smooth boundary.
Let $F: B^{n-1}(0;r)\to \R$ be a smooth function with $F(0)=0$ and $DF(0)=0$. We say that
{\em $F$ supports $\OM$ from the outside} if there exist constants $R_0\in (0,r)$ and
$s_0 > 0$ such that, for each $w\in \bdy\OM$, there exists a unitary transformation $\uni_w$
satisfying:
\begin{itemize}
 \item $\uni_w(H_w(\bdy\OM)) = \{v\in \Cn: v_n=0\}$, and
 \item $\uni_w(\nu_w) = (0,\dots,0,i)$, where $\nu_w$ denotes the inward unit normal
 vector to $\bdy\OM$ at $w$,
\end{itemize}
such that, denoting the $\C$-affine map $v\!\longmapsto\!\uni_w(v-w)$ as $\uni^w$, we have\vspace{-2.0mm}
\begin{multline*}
 \uni^w(\overline\OM)\cap B^{n-1}(0;R_0)\times((-s_0,s_0)+i(-s_0,s_0)) \\
 \subset\,\{z=(\z,z_n)\in
 B^{n-1}(0;R_0)\times((-s_0,s_0)+i(-s_0,s_0)): \im(z_n) \geq F(\z)\}.
\end{multline*}
\end{definition}

Perhaps the most familiar examples of functions on $[0,\infty)$ that vanish to infinite order at $0$
are the functions $\flt{\alpha}$, $\alpha > 0$, defined as follows:
\begin{equation}\label{E:good}
 \flt{\alpha}(x)\,:=\,\begin{cases}
 			e^{-1/x^{\alpha}},	&\text{if $x > 0$}, \\
 			0,			&\text{if $x = 0$}.
			\end{cases}
\end{equation}
These functions help us to translate the qualitative notion expressed prior to Definition~\ref{D:supp}
to give us the following result:

\begin{theorem}\label{T:contEXT}
Let $\OM$ be a bounded convex domain in $\Cn, \ n\geq 2$, with $\smoo^1$-smooth boundary.
Assume that $\OM$ is supported from the outside by $F(\z) := 
C\flt{\alpha}(\,\|\z\|^\alpha)$ (writing $z = (\z, z_n)$ for each $z\in \Cn$) for some $C > 0$
and some $\alpha\in (0,1)$. Then, every complex geodesic of $\OM$ extends continuously up
to $\bdy\D$.
\end{theorem}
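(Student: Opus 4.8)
The plan is to reduce the statement to a quantitative derivative estimate for $f$ and then to feed that estimate into the Hardy--Littlewood-type lemma. Recall that $f\colon\D\to\OM$ is a complex geodesic exactly when it is an infinitesimal isometry, i.e. $\kappa_\OM(f(\zt);f'(\zt))=1/(1-|\zt|^2)$ for every $\zt\in\D$, where $\kappa_\OM$ denotes the Kobayashi--Royden metric; since $\OM$ is bounded (hence complete hyperbolic), $K_\OM(f(0),f(\zt))=K_\D(0,\zt)\to\infty$ forces $f(\zt)\to\bdy\OM$, so $\delta(\zt):=\mathrm{dist}(f(\zt),\bdy\OM)\to0$. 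Fixing $\zt$, let $w=w(\zt)\in\bdy\OM$ be a nearest boundary point and split $f'(\zt)$ into its component along the complex normal $\C\nu_w$ and its component in $H_w(\bdy\OM)$. The goal is to bound each component and then to integrate along radii.

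For the lower bounds on $\kappa_\OM$ I would exploit the outside-support hypothesis through domain monotonicity. In the normalized coordinates $\uni^{w}$ the domain $\OM$ is, near $w$, contained in the model $\widehat\OM:=\{z=(\z,z_n):\im(z_n)\ge F(\z)\}$, with $F(\z)=C\flt{\alpha}(\|\z\|^{\alpha})=C\exp(-\|\z\|^{-\alpha^2})$; since $\kappa$ decreases under inclusions, supporting real half-planes of $\OM$ (globally, by convexity) together with the pinching furnished by $F$ yield lower bounds on $\kappa_\OM$ matching those of $\widehat\OM$. At $f(\zt)$, which sits at normal height $\delta(\zt)$, the normal extent of $\widehat\OM$ is $\asymp\delta(\zt)$ while its complex-tangential extent is the radius at which $F$ attains the value $\delta(\zt)$, namely $\|\z\|=\big[\log(C/\delta(\zt))\big]^{-1/\alpha^2}\;(=\crux{\delta(\zt)}^{1/\alpha})$. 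Inserting the isometry identity, I therefore expect
\[
\begin{aligned}
\big|\langle f'(\zt),\nu_w\rangle\big|&\lesssim\frac{\delta(\zt)}{1-|\zt|^2},\\
\big\|\mathrm{proj}_{H_w}f'(\zt)\big\|&\lesssim\frac{\big[\log(C/\delta(\zt))\big]^{-1/\alpha^2}}{1-|\zt|^2}.
\end{aligned}
\]
The decisive point is that $\alpha\in(0,1)$ makes the exponent $1/\alpha^2$ strictly larger than $1$; this is precisely the margin that renders the ensuing integral convergent. Note also that $F$ enters only at radii $\|\z\|\asymp[\log(C/\delta)]^{-1/\alpha^2}>0$, away from the possibly infinitely flat point $w$, so infinite flatness at boundary points is harmless.

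To pass from these derivative bounds to continuity I would calibrate $\delta(\zt)$ against $1-|\zt|$. The lower bound $\delta(\zt)\gtrsim 1-|\zt|$ is immediate: the $\C$-affine map $z\mapsto(\uni^{w}z)_n$ sends $\OM$ into the upper half-plane, so $\im\big((\uni^{w(\zt)}f(\cdot))_n\big)$ is a positive harmonic function taking the value $\delta(\zt)$ at $\zt$, and Harnack's inequality gives the claim. Granting the matching upper bound $\delta(\zt)\lesssim 1-|\zt|$, the tangential estimate becomes $\|\mathrm{proj}_{H}f'(r\sigma)\|\lesssim (1-r)^{-1}\,\omega(1-r)$ with the Dini modulus $\omega(t)=[\log(1/t)]^{-1/\alpha^2}$, and a change of variables along each radius recasts $\int_{0}\|\mathrm{proj}_{H}f'(r\sigma)\|\,dr$ as $\int_{0}\dfrac{[\log(C/\delta)]^{-1/\alpha^2}}{\delta}\,d\delta$, finite exactly because $1/\alpha^2>1$; the normal part contributes a finite integral too. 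Converting such an integrable bound on $f'$ into a (logarithmic) modulus of continuity for $f$ on $\overline{\D}$ is the role of the Hardy--Littlewood-type lemma.

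The main obstacle is precisely this calibration: the upper bound $\delta(\zt)\lesssim 1-|\zt|$, equivalently a definite lower bound on the inward normal speed of the geodesic. No purely pointwise estimate yields it, because all the convexity inequalities above bound $\delta$ only from below; one must instead invoke the extremality of $f$ together with the fact that $\delta_\OM=\mathrm{dist}(\cdot,\bdy\OM)$ is concave on the convex set $\OM$ (so $\delta(f(\cdot))$ is superharmonic and $-\log\delta(f(\cdot))$ is subharmonic on $\D$) to control the tangential ``cost'' accumulated by the geodesic and thereby force $\delta$ to decay at the normal rate. Reconciling this coupling between normal decay and tangential oscillation---while $w\mapsto\uni_w$ is merely continuous, not differentiable, since $\bdy\OM$ is only $\smoo^1$---is the technical heart of the argument, and it is exactly what the Hardy--Littlewood-type lemma is designed to absorb.
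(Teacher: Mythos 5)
Your overall architecture is the paper's: derive a growth bound for $\|f'(\zt)\|$ from the geodesic identity $\kappa_{\OM}(f(\zt);f'(\zt))=1/(1-|\zt|^2)$ plus the outside-support hypothesis, then feed it into the Hardy--Littlewood-type lemma (Proposition~\ref{P:HL}). Your Kobayashi lower bounds are, modulo rigour, the content of Lemma~\ref{L:rEST} combined with Graham's estimate (Result~\ref{R:kob}); but note that the paper does \emph{not} split $f'(\zt)$ into normal and complex-tangential components. It bounds the inscribed-disc radius $r_{\OM}(z;v)$ by $2\crux{d_{\OM}(z)}$ \emph{uniformly in the direction} $v$, interpolating between the normal and tangential cases by an elementary coordinate-geometry argument, and then Graham's inequality $\kappa_{\OM}(z;v)\ge\|v\|/(2r_{\OM}(z;v))$ controls the full vector $f'(\zt)$ at once. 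This sidesteps what your sketch asserts without derivation, namely component-wise lower bounds on $\kappa_{\OM}$: supporting half-planes give only the normal-component bound, and the tangential one needs an argument of exactly the inscribed-disc type. (The discrepancy between your exponent $1/\alpha^2$ and the paper's $1/\alpha$ is immaterial; both exceed $1$ precisely when $\alpha\in(0,1)$.)

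The genuine gap is the step you yourself flag as the ``main obstacle'': the upper bound on $\delta(\zt)=d_{\OM}(f(\zt))$. You never prove it; the superharmonicity/extremality remarks are not a proof, and the claim that this difficulty ``is exactly what the Hardy--Littlewood-type lemma is designed to absorb'' is wrong --- Proposition~\ref{P:HL} is a one-variable statement that \emph{presupposes} a majorant of $|f'|$ expressed in terms of $1-|\zt|$; it cannot produce decay of $\delta$. Moreover, you demand more than is needed or available: the linear calibration $\delta(\zt)\lesssim 1-|\zt|$ is not established for general bounded convex domains, and the paper never uses it. What the paper uses is Mercer's estimate (Result~\ref{R:Mer}), $d_{\OM}(f(\zt))\le C_2(1-|\zt|)^{1/\beta}$ for some $\beta>1$, valid for every complex geodesic of every bounded convex domain. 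The observation you missed is that this much weaker, polynomial calibration suffices, because $\delta$ enters the derivative bound only through a logarithm: since $t\longmapsto\crux{t}$ is increasing,
\[
 \crux{d_{\OM}(f(\zt))}\,\le\,\left[\log\left(\frac{C}{C_2(1-|\zt|)^{1/\beta}}\right)\right]^{-1/\alpha}
 =\,\beta^{1/\alpha}\left[\log\left(\frac{(C/C_2)^{\beta}}{1-|\zt|}\right)\right]^{-1/\alpha},
\]
so the exponent $1/\beta$ is swallowed by constants, yielding $\|f'(\zt)\|\le K_1(1-|\zt|)^{-1}\bigl[\log\bigl(K_2/(1-|\zt|)\bigr)\bigr]^{-1/\alpha}$ near $\bdy\D$. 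This majorant is integrable in $1-|\zt|$ precisely because $1/\alpha>1$, and Proposition~\ref{P:HL}, applied componentwise, finishes the proof; no change of variables $dr\leftrightarrow d\delta$ along radii (which would require comparability of differentials --- yet another unproven calibration) is needed. To repair your argument, replace the linear calibration by Mercer's bound, or prove the polynomial bound directly by comparing $\poi(0,\zt)=K_{\OM}(f(0),f(\zt))$ with the upper bound obtained by integrating Graham's upper estimate for $\kappa_{\OM}$ along the Euclidean segment joining $f(0)$ to $f(\zt)$, and then note that the logarithm renders the exponent harmless.
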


\begin{remark}
Note that a domain that satisfies the hypothesis of Theorem~\ref{T:contEXT} need not be
strictly convex. For such a domain $\OM$, it is possible for $\bdy\OM$ to contain line segments
that point along the complex-normal direction at each of the boundary points through which they pass.
\end{remark}

\begin{remark}
The condition in Theorem~\ref{T:contEXT} might seem at first a bit artificial. However,
readers familiar with the analysis of the boundary geometry of a domain around points that are
{\em not} of finite type know that {\em there exists no classification of the local normal forms} for
$\bdy\OM$, at infinite-type points in $\bdy\OM$, analogous to even the very
general notion due to Catlin \cite{catlin:bipd84}. When $\bdy\OM$ contains points at which it is
infinitely flat and yet $\bdy\OM$ is assumed to have {\em low regularity globally}, a
way to model $\bdy\OM$ is through objects such as the one in Definition~\ref{D:supp}. Even then,
choices need to be made. We have chosen the functions in \eqref{E:good} to underlie the functions
supporting $\OM$ because these are the most well-known functions
that vanish to infinite order.
\end{remark}

What can one say if the convex domain given is supported from the outside by $C\flt{\alpha}(\,\|\bcdot\|\,)$
and $\alpha\geq 1$\,? This is a subtle question, but see Remark~\ref{R:aboutAZ} below. In working with the
latter functions, $\alpha = 1$ usually marks a transition-point for
methods that work well for $0 <\alpha < 1$. For illustrations of this, see, for instance,
\cite[Section~3]{fornaessLee:abKmnd09},
\cite[Section~3]{halfpapNagelWainger:BSknpit10} or \cite{fornaessLeeZhanh:sedbaritcdC^2:11}.
\smallskip  
    
The key quantitative ingredient in the proof of Theorem~\ref{T:contEXT} is a
simple extension of one of the Hardy--Littlewood lemmas to holomorphic functions on $\D$ whose
derivatives have rather rapid growth. This extension leads to a characterization (which is unrelated
to Theorem~\ref{T:contEXT}, but may be of independent interest)
of a class of holomorphic functions that is {\em strictly larger} than every class
of holomorphic functions on $\D$ having a H{\"o}lder-continuous extension to $\bdy\D$.
We discuss these matters in the next section. The proof of Theorem~\ref{T:contEXT}
will be given in Section~\ref{S:contEXT}.

\begin{remark}\label{R:aboutAZ}
A few months after this paper was written, Zimmer showed, among other things, that if $\OM$
is a $\C$-strictly convex domain having $\smoo^{1, \alpha}$-smooth boundary, then every
complex geodesic of $\OM$ extends continuously up to $\bdy\D$ 
\cite[Corollary~1.8]{zimmer:cpdag15}. The domains in our
Theorem~\ref{T:contEXT} are, in Zimmer's terminology, $\C$-strictly convex, although
\cite[Corollary~1.8]{zimmer:cpdag15} addresses only those domains in Theorem~\ref{T:contEXT}
that have $\smoo^{1, \alpha}$-smooth boundaries. However, a $\C$-strictly convex $\OM$
admits points $p\in \bdy\OM$ at which $\bdy\OM$ can be flat to any extent without containing
a germ of a complex line at $p$. Zimmer's proof uses ingredients very different from those alluded
to above. The constraint that $\bdy\OM$ be $\smoo^{1, \alpha}$-smooth arises from one of
those ingredients.
\end{remark}
 
\section{A Hardy--Littlewood-type lemma}\label{S:HL}

The phrase ``Hardy--Littlewood-type lemma'' refers in the present context to any type of result that, given
a function $f\in \hol(\D)$:\vspace{-0.5mm}
\begin{itemize}
 \item[$a)$] tells us, based on the growth of $|f^\prime(\zt)|$ in terms of ${\sf dist}(\zt, \bdy\D)$,
 whether $f$ extends continuously up to $\bdy\D$,
 \item[{}] {\em AND}
 \item[$b)$] if possible, characterizes the extension of $f$ to $\bdy\D$ in terms of its modulus
 of continuity.
\end{itemize}
\vspace{1mm}

\noindent{We shall present here a result of this type. Our result arises from the following 
proposition, which is central to proving Theorem~\ref{T:contEXT}. Some notation: given an
interval $E\subset \R$, $\leb{1}(E)$ will denote the $\leb{1}$-class with respect to the
Lebesgue measure on $E$.

\begin{proposition}\label{P:HL}
Let $\Phi: [0,r_0)\to [0,+\infty]$ be a function
of class $\leb{1}([0,r_0))$ for some $r_0\in (0, 1)$. Let $f\in \hol(\D)$ and assume that
\[
 |f^\prime(r\cis{\tht})|\,\leq\,\Phi(1-r) \quad \forall r: 1-r_0 < r < 1 \ \text{and} \ \forall\tht\in \R.
\]
Then, $f$ extends continuously to $\bdy\D$.
\end{proposition}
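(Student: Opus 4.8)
The plan is to produce a boundary function $f^*$ on $\bdy\D$, realised as a radial limit attained \emph{uniformly in the angle}, then to prove $f^*$ continuous, and finally to note that these two facts force the tautological extension of $f$ to $\overline\D$ to be continuous. As a bookkeeping device I would set $\eta(\delta):=\int_0^\delta\Phi(t)\,dt$ for $\delta\in[0,r_0)$; since $\Phi\in\leb{1}([0,r_0))$ this $\eta$ is finite, nondecreasing, and, by absolute continuity of the Lebesgue integral, tends to $0$ as $\delta\to0^+$.

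The first step is a radial estimate. Fixing $\tht\in\R$ and integrating $f^\prime$ along the radial segment between radii $1-r_0<r<r^\prime<1$, the hypothesis gives
\[
  |f(r^\prime\cis\tht)-f(r\cis\tht)|\,\leq\,\int_r^{r^\prime}|f^\prime(s\cis\tht)|\,ds\,\leq\,\int_{1-r^\prime}^{1-r}\Phi(t)\,dt\,\leq\,\eta(1-r).
\]
This shows $r\mapsto f(r\cis\tht)$ is Cauchy as $r\to1^-$, so $f^*(\cis\tht):=\lim_{r\to1^-}f(r\cis\tht)$ exists for every $\tht$; letting $r^\prime\to1^-$ yields $|f^*(\cis\tht)-f(r\cis\tht)|\leq\eta(1-r)$, a bound uniform in $\tht$.

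The crux will be a tangential estimate combined with a balancing of two competing scales. On a fixed circle of radius $1-\delta$, with $0<\delta<r_0$ chosen so that $\Phi(\delta)<\infty$, integrating $f^\prime$ along the arc (where $|dz|=(1-\delta)|d\tht|\leq|d\tht|$) gives $|f((1-\delta)\cis{\tht_1})-f((1-\delta)\cis{\tht_2})|\leq\Phi(\delta)|\tht_1-\tht_2|$. Feeding this and the radial bound into the triangle inequality, I obtain
\[
  |f^*(\cis{\tht_1})-f^*(\cis{\tht_2})|\,\leq\,2\eta(\delta)+\Phi(\delta)\,|\tht_1-\tht_2|.
\]
Given $\eps>0$, I would first pick $\delta\in(0,r_0)$ small enough that $2\eta(\delta)<\eps/2$ and simultaneously $\Phi(\delta)<\infty$ (possible since $\eta(\delta)\to0$ while $\Phi$ is finite a.e.), and only then require $|\tht_1-\tht_2|<\eps/(2\Phi(\delta))$, forcing the left-hand side below $\eps$; hence $f^*$ is continuous on $\bdy\D$. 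To finish, set $F:=f$ on $\D$ and $F:=f^*$ on $\bdy\D$: for $z_k=r_k\cis{\tht_k}\to\cis{\tht_0}$, the uniform radial bound gives $|f(z_k)-f^*(\cis{\tht_k})|\leq\eta(1-r_k)\to0$, while $f^*(\cis{\tht_k})\to f^*(\cis{\tht_0})$ by continuity, so $F(z_k)\to F(\cis{\tht_0})$ and $F$ is continuous on $\overline\D$.

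I expect the balancing in the last step to be the only real obstacle: the tangential bound $\Phi(\delta)$ can blow up as $\delta\to0$, so one cannot send $\delta\to0$ and $|\tht_1-\tht_2|\to0$ at once. The fix is to freeze $\delta$ first — using the integrability of $\Phi$ to kill the radial defect $\eta(\delta)$ — and to shrink the angular gap only afterward, when the now-fixed finite number $\Phi(\delta)$ is harmless.
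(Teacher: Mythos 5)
Your proof is correct, and its skeleton matches the paper's: both establish the radial limit by integrating $f^\prime$ radially (so that integrability of $\Phi$ makes the limit exist \emph{uniformly} in $\tht$), and both then prove continuity of the boundary function by a three-term triangle inequality passing through an inner circle. The differences are in how the two remaining steps are filled in. For the middle term, the paper simply uses that $f$ is continuous (hence uniformly continuous) on the fixed compact circle $\{|z|=R\}$, whereas you re-use the hypothesis to get the quantitative arc bound $\Phi(\delta)|\tht_1-\tht_2|$; this forces you to worry about $\Phi(\delta)=+\infty$, which you correctly dispose of via a.e.\ finiteness of an $\leb{1}$ function, but the paper's variant avoids the issue altogether and is slightly cleaner. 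For the final step, the paper appeals to ``the usual Poisson-integral argument'' to identify $\bv{f}$ as the continuous extension, while you avoid Poisson integrals entirely: the uniform bound $|f(r\cis{\tht})-f^*(\cis{\tht})|\leq \eta(1-r)$ plus continuity of $f^*$ lets you verify continuity of the glued function directly at each boundary point. Your finish is more elementary and self-contained (everything is an explicit estimate from the hypothesis), and it makes transparent exactly where uniformity of the radial limit is used; the paper's finish outsources that work to standard harmonic-analysis machinery. Either route is complete and rigorous.
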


\noindent{The proof of this statement is simple, and we shall skip some elementary details. The condition on
$f^\prime$ implies, owing to the dominated convergence theorem, that the limit
\begin{equation}\label{E:bLim}
 \bv{f}(\cis{\tht})\,:=\,\lim_{r\to 1^-}f(r\cis{\tht})\,=\,f(0) +
 \lim_{r\to 1^-}\,\cis{\tht}\!\int_0^r\!f^{\prime}(s\cis{\tht})ds
\end{equation}
exists for each $\tht\in \R$, and that this limit is {\em uniform} in $\tht$. This, together with the
fact that, for a fixed $R\in (0,1)$, we can make $|f(R\cis{\tht_1}) - f(R\cis{\tht_2})|$ as small as we
wish by taking $\tht_1$ and $\tht_2$ sufficiently close to each other, implies that $\bv{f}\in \smoo(\bdy\D)$.
The usual Poisson-integral argument establishes that $\bv{f}(\cis{\tht})$ is the continuous extension of $f$
to $\cis{\tht}\in \bdy\D$. 
\smallskip

To proceed further, we must recall a definition. Given a function $g: \bdy\D\lrarw \C$,
we define the 
{\em modulus of continuity of $g$} by
\[
 \omo{g}(\delta)\,:=\,\sup_{|\tht-\phi|\leq\delta}|g(\cis{\tht})-g(\cis{\phi})|,  \ \ \ 0\leq \delta\leq \pi.
\]
This concept can be defined in a much more general setting, but we shall restrict ourselves to
$\C$-valued functions on $\bdy\D$. Clearly
\begin{equation}\label{E:limit0}
 g\in \smoo(\bdy\D;\,\C) \iff \lim_{\delta\to 0^+}\omo{g}(\delta) = 0.
\end{equation}

The classical Hardy--Littlewood lemma characterizes, in terms of
the growth of $f^\prime$, functions $f\in \hol(\D)$ that extend continuously up to $\bdy\D$ such that
$\omo{\bv{f}}(\delta) = C\delta^{\alpha}$, $C > 0$ and $0< \alpha\leq 1$, i.e., such that 
the boundary-value of $f$ belongs to a H{\"o}lder class on $\bdy\D$. In this section, given
$f\in \hol(\D)$, $\bv{f}$ will denote the radial boundary-value of $f$ (whenever it exists).
\smallskip

Since very little is assumed about the function $\Phi$ in Proposition~\ref{P:HL}, characterizing the
boundary-values $\bv{f}$ of the functions $f$ mentioned therein is probably not tractable. But it
does raise a natural question: is there such a characterization of a class that:\vspace{-0.5mm}
\begin{itemize}
 \item[1)] includes, for instance, functions for which $\omo{\bv{f}}(\delta) = (\log(1/\delta))^{-1}$ for $\delta$
 close to $0$;\vspace{-0.5mm}
 \item[{}] or 
 \item[2)] {\em at least} includes the case $\omo{\bv{f}}(\delta) = e^{\text{\small{$-\alpha(\log(1/\delta))^{1-\eps}$}}}$,
 $0< \eps< 1$, for $\delta$ close to $0$\,?
\end{itemize}
Observe: when $\eps$ is close to $0$ and $0< \alpha< 1$, the values of the latter function are very close to
$\delta^\alpha$ for $\delta\leq 1$, unless $\delta$ is {\em extremely} small. Yet, if there are any functions
$f\in \hol(\D)$ for which (2) is true, then $\bv{f}$ would be rougher than any H{\"o}lder-continuous
function.
\smallskip

In the recent literature, we have been introduced to classes of functions\,---\,defined by their
moduli of continuity\,---\,that are not as rough as the boundary-values given by Proposition~\ref{P:HL}
but are considerably less regular than the H{\"o}lder-continuous functions: see,
for instance, \cite{dyakonov:enL-tshf97} by Dyakonov, or \cite{kuusiMingione:upe12} by
Kuusi--Mingione. The next definition describes a continuity class in the style of the latter paper, but
which is large enough to include functions having the modulus of continuity described in (2).
\smallskip

\begin{definition}
Let $g\in \smoo(\bdy\D;\,\C)$. We say that $g$ is {\em log-Dini continuous} if $\omo{g}$, the
modulus of continuity of $g$, satisfies
\begin{equation}\label{E:logDini}
 \int_{0}^1(\log(1/x))^n\,\frac{\omo{g}(x)}{x}\,dx\,<\,\infty \ \ \ \text{for} \ n = 0, 1, 2,\dots
\end{equation}
\end{definition}

We are now in a position to state the main theorem of this section.

\begin{theorem}\label{T:HL}
Let $f\in \hol(\D)$. The function $f$ extends to a continuous
function on $\overline\D$ such that $f(\cis{\bcdot})$ is log-Dini continuous if and only
if there exists a positive non-increasing function $\Phi : [0, r_0)\lrarw [0, +\infty]$, for
some $r_0\in (0, 1)$, such that:
\begin{itemize}
 \item[$a)$] $(\log(1/\bcdot))^n\Phi$ is of class $\leb{1}([0,r_0))$ for each $n\in \N$; and\vspace{1mm}
 \item[$b)$] $|f^\prime(r\cis{\tht})|\leq \Phi(1-r)$ for all $r\in (1-r_0, 1)$ and for every $\tht\in \R$.
\end{itemize}
Furthermore, whenever this happens, $\omo{\bv{f}}(\delta)$ is  dominated by $3\big(\int_{0}^\delta\Phi(x)dx\big)$
for $0\leq \delta< r_0$.
\end{theorem}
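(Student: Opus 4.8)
The statement is an equivalence together with a quantitative tail estimate, so I would prove the two implications separately and extract the bound $\omo{\bv{f}}(\delta)\le 3\int_0^\delta\Phi(x)\,dx$ along the way. \emph{Sufficiency:} suppose a positive non-increasing $\Phi$ satisfying $(a)$ and $(b)$ exists. Since $\log(1/x)\ge\log(1/r_0)>0$ on $(0,r_0)$, condition $(a)$ forces $\Phi\in\leb{1}([0,r_0))$, so Proposition~\ref{P:HL} already yields a continuous extension of $f$ to $\overline\D$ with radial trace $\bv{f}$. For the modulus estimate I would fix $\delta\in(0,r_0)$, put $r=1-\delta$, and for $|\tht_1-\tht_2|\le\delta$ decompose $\bv{f}(\cis{\tht_1})-\bv{f}(\cis{\tht_2})$ into the two \emph{radial} differences $\bv{f}(\cis{\tht_j})-f(r\cis{\tht_j})$ and the \emph{tangential} difference $f(r\cis{\tht_1})-f(r\cis{\tht_2})$. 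Integrating $f'$ along a radius and using $(b)$ bounds each radial term by $\int_r^1\Phi(1-s)\,ds=\int_0^\delta\Phi(u)\,du$, while integrating $f'$ along the arc of radius $r$ bounds the tangential term by $r\,\Phi(1-r)\,|\tht_1-\tht_2|\le\delta\,\Phi(\delta)$. Here the monotonicity of $\Phi$ is used decisively, since it gives $\delta\,\Phi(\delta)\le\int_0^\delta\Phi(u)\,du$; summing the three contributions yields $\omo{\bv{f}}(\delta)\le 3\int_0^\delta\Phi(u)\,du$, which is the ``furthermore'' claim.

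To upgrade this to log-Dini continuity I would feed the bound into \eqref{E:logDini} and apply Fubini on $\{0<u<x<r_0\}$:
\[
 \int_0^{r_0}(\log(1/x))^n\,\frac{\omo{\bv{f}}(x)}{x}\,dx\;\le\;3\int_0^{r_0}\Phi(u)\Big(\int_u^{r_0}\frac{(\log(1/x))^n}{x}\,dx\Big)\,du .
\]
The inner integral equals $\tfrac{1}{n+1}\big[(\log(1/u))^{n+1}-(\log(1/r_0))^{n+1}\big]\le\tfrac{1}{n+1}(\log(1/u))^{n+1}$, so the right-hand side is at most $\int_0^{r_0}(\log(1/u))^{n+1}\Phi(u)\,du<\infty$ by $(a)$ with exponent $n+1$. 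The remaining piece $\int_{r_0}^1$ is finite because $\omo{\bv{f}}$ is bounded and the weight is bounded there; hence $\bv{f}$ is log-Dini continuous.

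\emph{Necessity}, which I expect to be the main obstacle, requires producing a \emph{single}, non-increasing, radially-determined majorant of $|f'|$ from $\omega:=\omo{\bv{f}}$. From the Cauchy formula, subtracting the constant $\bv{f}(\cis{\tht})$ (legitimate since $\int_{\bdy\D}(\zeta-z)^{-2}\,d\zeta=0$), I would write for $z=r\cis{\tht}$
\[
 f'(r\cis{\tht})=\frac{1}{2\pi i}\int_{\bdy\D}\frac{\bv{f}(\zeta)-\bv{f}(\cis{\tht})}{(\zeta-z)^2}\,d\zeta ,
\]
estimate the numerator by $\omega$, and use $|\cis{\psi}-r\cis{\tht}|^2=(1-r)^2+4r\sin^2\!\big(\tfrac{\psi-\tht}{2}\big)$ to obtain the $\tht$-independent bound
\[
 |f'(r\cis{\tht})|\;\le\;\frac{1}{\pi}\int_0^\pi\frac{\omega(t)}{(1-r)^2+4r\sin^2(t/2)}\,dt .
\]

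The key device is to dominate this oscillatory Poisson-type kernel by a manifestly decreasing one: for $r\ge\tfrac12$, the inequality $\sin(t/2)\ge t/\pi$ on $[0,\pi]$ gives $(1-r)^2+4r\sin^2(t/2)\ge\tfrac{2}{\pi^2}\big((1-r)^2+t^2\big)$, so with $r_0:=\tfrac12$ I would set
\[
 \Phi(\delta):=\frac{\pi}{2}\int_0^\pi\frac{\omega(t)}{\delta^2+t^2}\,dt ,\qquad 0\le\delta<r_0 .
\]
Because the integrand decreases in $\delta$ for each fixed $t$, $\Phi$ is automatically non-increasing (positivity is clear for non-constant $f$, the constant case being trivial), and $(b)$ holds by construction. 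To verify $(a)$ I would apply Fubini to $\int_0^{r_0}(\log(1/\delta))^n\Phi(\delta)\,d\delta$ and split the inner $\delta$-integral at $\delta=t$, bounding $(\delta^2+t^2)^{-1}$ by $t^{-2}$ for $\delta<t$ and by $\delta^{-2}$ for $\delta>t$; together with $\int_0^t(\log(1/\delta))^n\,d\delta\le C_n\,t(\log(1/t))^n$ and $\int_t^{r_0}(\log(1/\delta))^n\delta^{-2}\,d\delta\le(\log(1/t))^n/t$, this reduces the estimate for $t\in(0,r_0)$ to $\int_0^1(\log(1/t))^n\,\omega(t)/t\,dt<\infty$, i.e. exactly \eqref{E:logDini}, while $t\in[r_0,\pi]$ contributes a bounded amount. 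The crux throughout is this symmetrization: dominating the $\tht$-dependent kernel by the symmetric kernel $(\delta^2+t^2)^{-1}$ is what makes the monotonicity of $\Phi$ and the verification of $(a)$ simultaneously transparent.
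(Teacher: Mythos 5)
Your proof is correct, and its sufficiency half is essentially the paper's own argument: the same decomposition into two radial segments plus an arc, the same use of the monotonicity of $\Phi$ to absorb the arc term $\delta\,\Phi(\delta)$ into $\int_0^\delta\Phi$ (yielding the constant $3$), and the same Tonelli/Fubini computation reducing log-Dini continuity of $\bv{f}$ to condition $(a)$ with exponent $n+1$. The necessity half follows the paper's skeleton (Cauchy formula with the constant $\bv{f}(\cis{\phi})$ subtracted, estimation of the numerator by $\omo{\bv{f}}$), but your key device is genuinely different, and in a useful way. The paper takes $\Phi(1-r)$ to be the integral of $\omo{\bv{f}}(\tau)$ against the \emph{exact} kernel $(r^2-2r\cos\tau+1)^{-1}$ and verifies $(a)$ via the substitution $y=\tau/\pi x$ together with the subadditivity estimate $\omo{\bv{f}}(\lambda x)\le(\lambda+1)\omo{\bv{f}}(x)$ from DeVore--Lorentz; it never checks that this $\Phi$ is non-increasing, which the statement requires and which is not obvious: for fixed $\tau$ the kernel $\bigl(\delta^2+4(1-\delta)\sin^2(\tau/2)\bigr)^{-1}$ is \emph{increasing} in $\delta=1-r$ wherever $\delta<2\sin^2(\tau/2)$, so monotonicity of the paper's $\Phi$ is not a pointwise fact about the integrand. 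Your symmetrization --- dominating the kernel by a constant times $(\delta^2+t^2)^{-1}$ and \emph{defining} $\Phi$ with that kernel --- makes the required monotonicity manifest, so your construction delivers the full statement (a positive, non-increasing majorant) where the paper's proof leaves a small, fixable gap. Your verification of $(a)$, by Fubini and splitting the inner integral at $\delta=t$, is also more elementary: it avoids the subadditivity inequality altogether and lands on the log-Dini integral with exponent $n$ rather than $n+1$; either way the definition of log-Dini continuity (which demands all exponents) absorbs the cost.
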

\begin{proof}
We first assume the existence of a $\Phi$ for which $(a)$ and $(b)$ hold true. From the paragraph following the
statement of Proposition~\ref{P:HL}, we have that $f$ extends to a continuous function of on $\bdy\D$. We
shall continue to denote this extension by $f$ and set $\bv{f} := \left.f\right|_{\bdy\D}$. We shall now deduce
some information about $\omo{\bv{f}}$.
\smallskip

To this end, fix
$\tht_1, \tht_2\in \R$ such that, for the moment, $0< \tht_2 - \tht_1< \pi$.
Let $\pth{r,\,\rho}: [0,1]\to \D$ be a path from
$r\cis{\tht_1}$ to $r\cis{\tht_2}$ whose image comprises the radial segments
$[r\cis{\tht_1}, \rho\cis{\tht_1}]$ and $[\rho\cis{\tht_2}, r\cis{\tht_2}]$, where $1-r_0< \rho< r< 1$,
and the (shorter) arc of the circle $\{z\in \C: |z| = \rho\}$ from
$\rho\cis{\tht_1}$ to $\rho\cis{\tht_2}$. Owing to the holomorphicity of $f^\prime$, and the
existence of the limits in \eqref{E:bLim}, we have
\[
 f(r\cis{\tht_2}) - f(r\cis{\tht_1})\,=\,\lim_{r\to 1^-}\int_{\pth{r,\,\rho}}\!f^{\prime}(z)dz
\]
for any $\rho$ as above. (A point about notation: each integral below of the
form $\int_{a}^{b}$, $a < b\in \R$, that involves a {\em non-negative function}
denotes the standard (hence {\em unoriented}) Lebesgue integral on the interval $[a, b]$.) Thus \vspace{-1mm}
\[
 |f(\cis{\tht_2}) - f(\cis{\tht_1})| \ \leq \ 2\int_{\rho}^{1}\!\Phi(1-s)ds + 
 \int_{\tht_1}^{\tht_2}\!\Phi(1-\rho)d\tht.
\]
Let us write $I(x) := \int_0^x\Phi(s)ds$. From the last inequality, we can see that our temporary restriction
on $(\tht_1, \tht_2)$ does not matter, and we have
\begin{align}
 |f(\cis{\tht_2}) - f(\cis{\tht_1})|\,&\leq\,2I(1-\rho) + |\tht_2 - \tht_1|\Phi(1-\rho) \notag \\
 &\leq\,\left(2 + \frac{|\tht_2 - \tht_1|}{1-\rho}\right)I(1-\rho) \ \ \ \forall \tht_1,\,\tht_2 : 
 |\tht_2 - \tht_1|\leq \pi. \label{E:inter1}
\end{align}
The second inequality follows from the fact that $\Phi$ is non-increasing. Let us define
\[
 \varpi(\delta)\,:=\,\begin{cases}
 					3I(\delta), &\text{if $0\leq \delta< r_0$}, \\
 					2\sup|\bv{f}|, &\text{if $r_0\leq \delta\leq \pi$}.
 					\end{cases}
\]
We shall make a choice for the free parameter $\rho$ in \eqref{E:inter1} based on 
$(\tht_1, \tht_2)$. Taking $\rho = 1-|\tht_2 - \tht_1|$ 
whenever $0< |\tht_2 - \tht_1|< r_0$, we see that
\begin{equation}\label{E:majorant}
 |f(\cis{\tht_2}) - f(\cis{\tht_1})|\,\leq\,\varpi(|\tht_2 - \tht_1|)
 \ \ \ \forall \tht_1,\,\tht_2 : |\tht_2 - \tht_1|\leq \pi.
\end{equation}
It is evident from \eqref{E:majorant} that $\varpi$ is a majorant of $\omo{\bv{f}}$.
\smallskip

From this last statement, it follows that to establish that $\bv{f}$ is log-Dini continuous,
it suffices to establish the inequality in \eqref{E:logDini} with
\begin{itemize}
 \item $\omo{g}$ replaced by $\varpi$, and
 \item the integral over the interval $[0, 1]$ replaced by an integral over the interval $[0, R]$ for
 some small $R > 0$ (since the integrand in \eqref{E:logDini} is unbounded only as $x\to 0^+$).
\end{itemize}   
Let us fix some $R$ such that $0< R< r_0$. By Tonelli's theorem, we have:
\begin{align*}
 \frac{1}{3}\int_{0}^R(\log(1/x))^n\,\frac{\varpi(x)}{x}\,dx\,&=\,\int_0^R\int_0^x
 		(\log(1/x))^n\,\frac{\Phi(s)}{x}\,ds\,dx \\
 		&=\,\int_0^R\left[\,\int_s^R\frac{(\log(1/x))^n}{x}\,dx\right]\Phi(s)\,ds \\
 		&=\,\frac{1}{n+1}\int_0^R\big[(\log(1/s))^{n+1} - (\log(1/R))^{n+1}\big]\,\Phi(s)\,ds\,<\,\infty
\end{align*}
for each $n\in \N$. The last assertion is just the condition $(a)$. From our remarks above, it follows
that $\bv{f}$ is log-Dini continuous.
\smallskip

Conversely, let us assume that $f$ extends to a continuous function on $\overline\D$ and that $\bv{f}$ is
log-Dini continuous. Then, for any $r\cis{\phi}\in \D$, taking $\phi\in (-\pi, \pi]$, the Cauchy integral theorem
gives us
\[
 f^\prime(r\cis{\phi})\,=\,\frac{1}{2\pi}\int_{-\pi}^\pi\!
 \frac{f(\cis{\tht})}{(\cis{\tht} - r\cis{\phi})^2}\cis{\tht}\,d\tht\,=\,\frac{1}{2\pi}\int_{-\pi}^\pi\!
 \frac{f(\cis{\tht}) - f(\cis{\phi})}{(\cis{\tht} - r\cis{\phi})^2}\cis{\tht}\,d\tht.
\]
Setting $\tau := \tht - \phi$, whence $|f(\cis{\tht}) - f(\cis{\phi})|$ transforms to
$|f(\cis{(\phi+\tau)}) - f(\cis{\phi})|$, we have
\[
 |f^\prime(r\cis{\phi})|\,\leq\,\frac{1}{\pi}\int_{0}^\pi
 \frac{\omo{\bv{f}}(\tau)}{r^2-2r\cos\tau+1}\,d\tau \ \equiv \ \Phi(1-r).
\]
We would be done if we could find some small $R> 0$ such that 
$\int_{0}^R(\log(1/x))^n\Phi(x)dx < \infty$ for each $n\in \N$.
\smallskip

Note that
\begin{align*}
 r^2-2r\cos\tau+1=(1-r)^2 + 4r\sin^2(\tau/2)&\geq (1-r)^2+4r(\tau/\pi)^2 \\
 &\geq (1-r)^2 + (\tau/\pi)^2 \ \ \forall r\in (1/4, 1),\ \forall \tau\in [0, \pi].
\end{align*}
Thus, if we fix $R\in (0, 3/4)$ and set $x := (1-r)$, then it suffices to establish the convergence of the
following integrals:
\[
 I_n\,:=\,\frac{1}{\pi}\int_{0}^R(\log(1/x))^n\left[\,\int_{0}^\pi\frac{\omo{\bv{f}}(\tau)}{x^2+(\tau/\pi)^2}\,d\tau\right]dx,
 \ \ n\in \N.
\]
Fix a $\delta\in (0, R)$ and let $I_n(\delta)$ denote the integral over $[\delta, R]$ of the integrand
on the right-hand side above. Making the change of variable $y := \tau/\pi{x}$ in the inner integral, we get
\begin{align}
 I_n(\delta)\,&=\,\int_{\delta}^R\frac{(\log(1/x))^n}{x}\left[\,\int_{0}^{1/x}
 			\frac{\omo{\bv{f}}(\pi xy)}{1+y^2}\,dy\right]dx \notag \\
 			&\leq\,\int_{\delta}^R(\log(1/x))^n\frac{\omo{\bv{f}}(x)}{x}\left[\,\int_{0}^{1/x}
 			\frac{1+\pi{y}}{1+y^2}\,dy\right]dx \notag \\
 			&\leq\,C\int_{\delta}^R(\log(1/x))^n\frac{\omo{\bv{f}}(x)}{x}\big[1 + \log(1/x)\big]\,dx,
 			\label{E:unifFin}
\end{align}
where $C> 0$ is a constant that does not depend on $x$, $n$ or $\delta$.
The first inequality above follows from the standard inequality
$\omo{\bv{f}}(\lambda x) \leq (\lambda +1)\omo{\bv{f}}(x)$ for all $\lambda\geq 0$ (and sufficiently
small that $\omo{\bv{f}}(\lambda x)$ makes sense); see \cite[Chapter 2, \S\,6]{devoreLorentz:ca93}.
Since $\bv{f}$ is log-Dini continuous, the integrands in \eqref{E:unifFin} are, in fact,
of class $\leb{1}([0, R])$ for each $n\in \N$. Thus, it follows from the above estimate that
$I_n< \infty$ for each $n\in \N$.
\smallskip

The final assertion of the theorem has already been established in the argument leading from the
inequality \eqref{E:inter1} to the inequality \eqref{E:majorant}.  
\end{proof}

One may ask whether there are any functions $f$ in the class $\hol(\D)\cap \smoo(\overline\D)$ beyond
those already described by the classical Hardy--Littlewood lemma for which $\omo{\bv{f}}$ is as in the above
theorem. We address this question by the following:

\begin{example}\label{Ex:less_reg}
There exist functions $f\in \hol(\D)\cap \smoo(\overline\D)$ such that $\bv{f}$ is
log-Dini continuous but belongs to no H{\"o}lder class.
\end{example}

\noindent{Pick a function $\psi\in \smoo(\bdy\D;\,\R)$ such that
\[
\omo{\psi}(\delta)\,=\,\exp\left(-C(\log(1/\delta))^{1-\eps}\right) \ \ \text{for $\delta$ close to $0$},
\]
where $C> 0$ and $0< \eps< 1$, and $\psi$ is not in any H{\"o}lder class. It is an elementary
exercise to show that
\begin{equation}\label{E:real-part}
 \int_{0}^1(\log(1/x))^n\,\frac{e^{\pw{-C(\log(1/x))^{1-\eps}}}}{x}\,dx\,<\,\infty
 \ \ \ \text{for} \ n = 0, 1, 2,\dots 
\end{equation}
Let $\cnj{\psi}$ denote the conjugate function of $\psi$. Then, by the Privalov--Zygmund estimate,
there exists a constant $K> 0$ such that
\begin{equation}\label{E:P-Z}
 \omo{\cnj{\psi}}(\delta)\,\leq\,K\left[\,\int_{0}^\delta\frac{\omo{\psi}(x)}{x}\,dx +
 \delta\int_{\delta}^\pi\frac{\omo{\psi}(x)}{x^2}\,dx\right].
\end{equation}
This calls for a somewhat careful estimation of the two integrals above. First, by
decomposing $(0,\delta]$ as $(0,\delta] = \cup_{j=0}^\infty[2^{-(j+1)}\delta, 2^{-j}\delta]$ (by
\eqref{E:real-part}, the integral of $\omo{\psi}(x)/x$ over 
 $(0, \delta]$ is the same as the integral over $[0, \delta]$), we have
\begin{align}
 \int_{0}^\delta\frac{e^{\pw{-C(\log(1/x))^{1-\eps}}}}{x}\,dx\,&\leq\,\sum_{j=0}^\infty
 \exp\left\{\!-C\left(\log\left(\frac{2^j}{\delta}\right)\right)^{1-\eps}\right\}. \notag \\
 &\leq\,\sum_{j=0}^\infty
 \exp\big[\!-\!2^{-\eps}C\left((j\log2)^{1-\eps}+(\log(1/\delta))^{1-\eps}\right)\big] \notag \\
 &=\,K^\prime\exp\left(-2^{-\eps}C(\log(1/\delta)^{1-\eps}\right) \ \ \ \text{for $\delta$ close to $0$}. \label{E:conj-1}
\end{align}
The second inequality above arises from the fact that, close to $0$, $\omo{\psi}$ is a concave function.
Let $R> 0$ be such that $\omo{\psi}$ is concave on $[0, R]$. Then, since $\omo{\psi}(0) = 0$,
$x\!\longmapsto\!\omo{\psi}(x)/x$ is a decreasing function on $[0, R]$. Using this fact, we
get the crude, but adequate, estimate
\begin{equation}\label{E:conj-2}
 \delta\int_{\delta}^\pi\frac{e^{\pw{-C(\log(1/x))^{1-\eps}}}}{x^2}\,dx\,\leq\,\log\left(\frac{R}{\delta}\right)
 e^{\pw{-C(\log(1/\delta))^{1-\eps}}} + O(\delta).
\end{equation}

It is easy to see directly from the estimate \eqref{E:P-Z} that $\lim_{\delta\to 0^+}\omo{\cnj{\psi}}(\delta) = 0$.
Thus, we conclude from \eqref{E:limit0} that $\cnj{\psi}\in \smoo(\bdy\D;\,\R)$. Then, by definition,
$(\psi+i\cnj{\psi})$ is the boundary value of a function $\Psi\in \hol(\D)\cap\smoo(\overline\D)$. Now
\eqref{E:real-part}, taken together with the estimates \eqref{E:conj-1} and \eqref{E:conj-2}, implies
that $\bv{\Psi}$ is log-Dini continuous. However, since $\psi = \bv{(\re\Psi)}$ was chosen so that it
does {\em not} belong to any H{\"o}lder class, $\bv{\Psi}$ is not in any H{\"o}lder class either. \hfill
$\blacktriangleleft$

\begin{remark}
With more intensive analysis, one can show that the holomorphic functions constructed in
Example~\ref{Ex:less_reg} are such that the modulus of continuity of $\bv{f}$ is
$O(\omo{\psi})$. That \eqref{E:conj-1} can be improved is not hard to see: one uses a better
lower bound for $(\log(2^j) + \log(1/\delta))^{1-\eps}$ in the step preceding \eqref{E:conj-1}. The
estimate \eqref{E:conj-2} is rather crude. It can be improved as desired, but this requires some
effort. Since this is not the main thrust of the present section, we shall not elaborate any further
on the last point.
\end{remark}
 
\section{The proof of Theorem~\ref{T:contEXT}}\label{S:contEXT}

We now return to several complex variables.
We begin by presenting some notation. If $\OM$ is a bounded domain in $\C^n$, $z\in \OM$ and
$v\in \C^n\setminus\{0\}$, then
\begin{align*}
 d_{\OM}(z)\,:=\,&\text{the Euclidean distance of $z$ from $\bdy\OM$}, \\
 r_{\OM}(z; v)\,:=\,&\text{the radius of the largest complex-affine closed disc, centered at $z$} \\
 				&\text{and tangent to $v$, that is contained in $\overline{\OM}$}.
\end{align*}
The crux of the proof of Theorem~\ref{T:contEXT} is to find an estimate for $f^\prime$,
where $f: \D\to \OM$ is a complex geodesic. One could then try to apply Proposition~\ref{P:HL}
to deduce continuous extension to $\bdy\D$. If we have a reasonably good estimate for the Kobayashi
pseudometric on $\OM$, where $\OM$ is as in Theorem~\ref{T:contEXT}, then we can use it
to estimate $f^\prime$. This explains the need for the following result of Graham:

\begin{result}[Graham, \cite{graham:dthmcd90}]\label{R:kob}
Let $\OM$ be a bounded convex domain in $\C^n$ and let $\kappa_{\OM}(z;\,\bcdot)$ denote the Kobayashi
metric on $\OM$ at the point $z\in \OM$. Then:
\[
 \frac{\|v\|}{2r_{\OM}(z; v)}\,\leq\,\kappa_{\OM}(z; v)\,\leq\,\frac{\|v\|}{r_{\OM}(z; v)} \quad
 \forall z\in \OM \ \text{and} \ \forall v\in \C^n.
\]
\end{result}
\noindent{We must point out that in this section all distances and norms on $\C^n$ will be the Euclidean
distance and the Euclidean norm, both denoted by $\|\bcdot\|$.}
\smallskip

To carry out the programme sketched above, we will require explicit estimates on $r_{\OM}$. This
is the role of the following lemma:  
 
\begin{lemma}\label{L:rEST}
Let $\OM$ be as in Theorem~\ref{T:contEXT}. There exists a compact subset $K$ of $\OM$ such that
for each $z\in \OM\setminus K$,
\[
 r_{\OM}(z; v)\,\leq\,2\crux{d_{\OM}(z)} \quad
 \forall v\in \C^n\setminus\{0\},
\]
where $C$ and $\alpha$ are the constants appearing in Theorem~\ref{T:contEXT}.
\end{lemma}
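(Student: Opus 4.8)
We need to bound $r_{\OM}(z;v)$ — the radius of the largest complex-affine disc centered at $z$, tangent to $v$, contained in $\overline{\OM}$ — for $z$ near $\bdy\OM$. The bound should be $2[\log(C/d_{\OM}(z))]^{-1/\alpha}$.

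Let me set up the geometry. Given $z \in \OM \setminus K$, let $w \in \bdy\OM$ be a nearest boundary point, so $\|z - w\| = d_{\OM}(z)$. The support condition from Definition D:supp gives us, via the unitary $\uni_w$ and the affine map $\uni^w$, that near $w$ the domain lies above the graph $\im(z_n) \geq F(\z) = C\flt{\alpha}(\|\z\|^\alpha)$ in suitable coordinates.

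Key setup: After applying $\uni^w$, the point $w$ goes to origin, the inward normal $\nu_w$ goes to $(0,\dots,0,i)$. So $z$ maps to $\uni^w(z) = \uni_w(z-w)$, and since $z - w$ is (nearly) $d_{\OM}(z)\,\nu_w$ (for $\smoo^1$ boundary, nearest point gives $z - w = d_{\OM}(z)\nu_w$), we get $\uni^w(z) = (0,\dots,0, i\,d_{\OM}(z))$. So in the new coordinates $z$ sits at height $d_{\OM}(z)$ on the imaginary $z_n$-axis, with $\z$-component zero.

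Now a complex disc through $z$ tangent to $v$ of radius $r$ has, in the image, points $\uni^w(z) + \zeta\, \uni_w(v)$ for $|\zeta| \leq r$ (after normalizing $v$). The disc must stay in $\overline{\OM}$, hence above the graph. The obstruction is strongest in complex-tangential directions: if $\uni_w(v)$ has a nonzero $\z$-component, then the disc sweeps out $\z$-values of size $\sim r$, pushing $\|\z\|$ up to order $r$, which forces $\im(z_n)$ to exceed $F(\z) = C\flt{\alpha}(\|\z\|^\alpha) \sim C\exp(-\|\z\|^{-\alpha^2})$ — wait, need to track: $\flt{\alpha}(x) = e^{-1/x^\alpha}$, so $F(\z) = Ce^{-1/(\|\z\|^\alpha)^\alpha} = Ce^{-\|\z\|^{-\alpha^2}}$. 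I'd double-check this composition; the exponent structure is what ultimately yields the $[\log(C/d)]^{-1/\alpha}$ bound.

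**Main argument.** The strategy is: if $r_{\OM}(z;v)$ were large, the disc would reach $\z$-values where $F(\z)$ exceeds the available height $d_{\OM}(z)$, contradicting containment. Concretely, I would split on the direction of $v$. The worst case is $v$ complex-tangential ($\uni_w(v)$ has $z_n$-component zero), since then the disc moves purely in the flat $\z$-directions and the height $\im(z_n)$ stays $\approx d_{\OM}(z)$. For the disc to fit, one needs $d_{\OM}(z) \geq F(\z)$ at the swept points, i.e. $d_{\OM}(z) \geq C\flt{\alpha}(\|\z\|^\alpha)$ where $\|\z\| \sim c\,r$ for some geometric constant. Inverting $\flt{\alpha}$: from $d_{\OM}(z) \geq C e^{-\|\z\|^{-\alpha^2}}$ one solves for $\|\z\|$ and hence for $r$, producing exactly a bound of the form $[\log(C/d_{\OM}(z))]^{-1/\alpha}$, up to the constant $2$. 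For general $v$ one decomposes $\uni_w(v)$ into its $\z$-part and $z_n$-part; the $z_n$-part only adds a parabolic $\sim r^2$ vertical excursion which is dominated, so the tangential part controls the estimate. I would choose $K$ (equivalently $R_0, s_0$ from Definition D:supp) large enough that for $z \in \OM\setminus K$ the whole relevant disc stays inside the coordinate box $B^{n-1}(0;R_0)\times((-s_0,s_0)+i(-s_0,s_0))$ where the support inequality is valid.

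**The main obstacle.** The delicate point is handling the general (non-complex-tangential) direction $v$ and verifying the geometric constant relating the disc radius $r$ to the maximal $\|\z\|$ swept, while staying inside the validity box. A complex-affine disc tangent to a general $v$ curves in the $z_n$-direction as $\zeta$ varies — one must confirm that the $\im(z_n)$ value along the disc does not rise fast enough to accommodate $F(\z)$, i.e. that the parabolic $r^2$-type vertical gain is negligible against the exponentially-flat but eventually-dominant growth of $F$. Making this quantitative, and checking that the factor $2$ absorbs all geometric constants (the $\smoo^1$ comparison between $z-w$ and $d_{\OM}(z)\nu_w$, the relation $\|\z\|\sim r$, and the inversion of $\flt{\alpha}$), is where the real work lies. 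The estimate should then follow by monotonicity of $\flt{\alpha}$ and elementary manipulation of logarithms.
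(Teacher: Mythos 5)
Your overall strategy coincides with the paper's: pass to the affine coordinates $\uni^{w}$ at a nearest boundary point $w$, so that $z$ sits at $(0,\dots,0,i\,d_{\OM}(z))$, force any disc of too large a radius to collide with the supporting graph $\im(z_n)=F(\z)$, and invert $\flt{\alpha}$ to get the logarithmic bound. Your treatment of the complex-tangential case is correct in outline. However, the step that you yourself flag as ``where the real work lies''\,---\,general, non-tangential directions $v$\,---\,is precisely the crux of the lemma, and your sketch of it rests on a false premise. A complex-affine disc $\zeta\mapsto z+\zeta v$ is \emph{affine}: it does not ``curve in the $z_n$-direction,'' and the excursion of $\im(z_n)$ along it is \emph{linear} in $|\zeta|$ (of size $|\zeta|\,|(\uni_{w}v)_n|$), not ``parabolic $\sim r^2$.'' So there is nothing to be ``dominated''; worse, in half of the directions on the disc the height $\im(z_n)$ \emph{rises} linearly, and no comparison with the flat function $F$ can produce a collision there. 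The paper's resolution is to exploit the phase freedom: since $r_{\OM}(z;v)=r_{\OM}(z;\cis{\tht_v}v)$, one may rotate so that $V:=\cis{\tht_v}\uni_{w}v=(\V,-iV_n)$ with $V_n\geq 0$, i.e., the normal component points straight \emph{down}; then along the real ray $\zeta=t>0$ the height decreases linearly while $\|\z\|$ grows linearly, and a collision with the graph occurs at the unique $\rho>0$ solving $C\flt{\alpha}(\rho)+\rho V_n/\|\V\|=d_{\OM}(z)$, which is the paper's equation \eqref{E:triangle}. This one-variable collision equation, not a perturbation of the tangential case, is what handles general $v$.

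Even granting a collision, your proposal contains no mechanism for the uniform constant $2$, and this requires a second idea absent from your sketch: since $\|\V\|$ can be arbitrarily small (nearly normal $v$), the actual disc radius is $\rho_*=\rho/\|\V\|$, which is not controlled by $\rho$ alone. The paper combines the two pieces of information contained in \eqref{E:triangle} through the identity $1/\|\V\|=\|\V\|+V_n^2/\|\V\|$ (valid because $\|V\|=1$), which yields
\[
 \rho_*\,=\,\rho\|\V\|+\Big(\rho\frac{V_n}{\|\V\|}\Big)V_n\,\leq\,\rho+d_{\OM}(z)\,\leq\,\crux{d_{\OM}(z)}+d_{\OM}(z),
\]
and the additive $d_{\OM}(z)$ is absorbed into the factor $2$ only because $K$ is chosen so that $d_{\OM}(z)<x_0$, where $x_0$ in \eqref{E:cap} is defined precisely so that $d<x_0$ forces $d\leq\crux{d}$. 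Neither of these quantitative steps appears in your proposal, so as written it establishes the bound only for exactly tangential and exactly normal $v$. (Your side remark about the composition $\flt{\alpha}(\|\z\|^{\alpha})=e^{-\|\z\|^{-\alpha^2}}$ is a fair catch: the paper's proof in fact works with the supporting function $C\flt{\alpha}(\|\bcdot\|)$ rather than $C\flt{\alpha}(\|\bcdot\|^{\alpha})$, so the exponent bookkeeping between the statement and the proof does deserve the double-check you propose; but since this amounts to renaming the parameter $\alpha^2\leftrightarrow\alpha$ within $(0,1)$, it affects neither the structure of the argument nor the theorem it feeds into.)
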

\begin{proof}
Define $x_0$ as follows (it is not hard to argue that the set on the right is finite):
\begin{equation}\label{E:cap}
 x_0\,:=\,\min\left[\{C/2\}\cup \left\{x\in (0,C) : x = [\,\log(C/x)\,]^{-1/\alpha}\right\}\right].
\end{equation}
Let $s_0$ and $R_0$ be as given by Definition~\ref{D:supp} with $F = C\flt{\alpha}(\,\|\bcdot\|\,)$, and write
\[
 \suppmf\,:=\,\{z=(\z, z_n)\in \C^n : \im(z_n) = C\flt{\alpha}(\|\z\|)\}.
\]
We can find a compact subset $K$ of $\OM$ such that whenever $z\in \OM\setminus K$,
\begin{itemize}
 \item $d_{\OM}(z) < \min(s_0, x_0)$; and
 \item For any point $w(z)\in \bdy\OM$ that satisfies $d_{\OM}(z) = \|z - w(z)\|$, every complex
 line of the form
 \[
  \uni^{w(z)}(z + \C{v}) \; \;(\,=\,(0,\dots,0,i\bcdot d_{\OM}(z)) + \C{\uni_{w(z)}(v)}\;), \; \;
  v\in H_{w(z)}(\bdy\OM)\setminus\{0\},
 \]
 intersects $\suppmf$ in a circle of radius $[\,\log(C/d_{\OM}(z))\,]^{-1/\alpha} = 
 (C\flt{\alpha})^{-1}(d_{\OM}(z))$.
\end{itemize}
Here $\uni^{w(z)}$ is as described in Definition~\ref{D:supp}.
For each $z\in \OM\setminus K$, let us {\em fix} a $w(z)$ for the remainder of this proof.
The above implies that if $z\in \OM\setminus K$, then
\begin{equation}\label{E:ctang}
  r_{\OM}(z; v)\,\leq\,\crux{d_{\OM}(z)} \quad 
  \forall v\in H_{w(z)}(\bdy\OM)\setminus\{0\}.
\end{equation}
Of course, for all such $z$, we also have
\begin{equation}\label{E:perp}
 r_{\OM}(z; v)\,=\,d_{\OM}(z) \quad 
  \forall v\in H_{w(z)}(\bdy\OM)^\perp\setminus\{0\}.
\end{equation}
(The orthogonal complement here is with respect to the standard
Hermitian inner product on $\C^n$.)
\smallskip

Fix a $z\in \OM\setminus K$, consider a general {\em unit} vector $v\in \C^n\setminus\{0\}$, and let $\tht_v\in \R$ be
such that
\[
 \cis{\tht_v}\uni_{w(z)}v\,=\,(\cis{\tht_v}(\uni_{w(z)}v)^\prime, -i|(\uni_{w(z)}v)_n|)\,=:\,V.
\]
Observe that $r_{\OM}(z; v) = r_{\OM}(z; \cis{\tht_v}v)$. In view of \eqref{E:ctang} and \eqref{E:perp}, we may
focus on those $V$\,---\,writing $V = (\V, -iV_n)$\,---\,such that $\V \neq 0$ and $V_n\ > 0$.
We view a portion of $\bdy\OM$ around $w(z)$ after the application of the $\C$-affine transformation
$\uni^{w(z)}$. It then follows from elementary coordinate geometry that if $\rho$ is a positive number that
satisfies (recall that $z$ is mapped to $(0,\dots,0,i\bcdot d_{\OM}(z))$ under $\uni^{w(z)}$) 
\begin{equation}\label{E:triangle}
 C\flt{\alpha}(\rho) + \rho\frac{V_n}{\|\V\|}\,=\,d_{\OM}(z),
\end{equation}
then the set $(0,\dots,0,i\bcdot d_{\OM}(z)) + \overline{D(0;\rho_*)}\,V$ intersects $\suppmf$, where
\[
 \rho_*\,:=\,\rho/\|\V\|.
\]
Since
\[
 \suppmf\,\subset\,B^{n-1}(0;R_0)\times((-s_0,s_0)+i(-s_0,s_0))\setminus \uni^{w(z)}(\OM),
\]
it easily follows that $(0,\dots,0,i\bcdot d_{\OM}(z)) + \overline{D(0;\rho_*)}\,V$ intersects
$\uni^{w(z)}(\bdy\OM)$. Hence
\begin{equation}\label{E:interim}
 r_{\OM}(z; v)\,=\,r_{\uni^{w(z)}(\OM)}((0,\dots,0,i\bcdot d_{\OM}(z)); V)\,\leq\,\rho_*.
\end{equation}

By \eqref{E:triangle} and the fact that $\flt{\alpha}$ is increasing on $(0,\infty)$ we get
that $\rho \leq [\,\log(C/d_{\OM}(z))\,]^{-1/\alpha}$. Therefore,
\[
 \rho_* \ \leq\,\crux{d_{\OM}(z)} +\,d_{\OM}(z) \ \leq \ 2\crux{d_{\OM}(z)}.
\]
The second inequality follows from the fact that $d_{\OM}(z) < \min(s_0, x_0)$,
where $x_0$ is defined by \eqref{E:cap}.
The above inequality, together with \eqref{E:interim} and the estimates in the first paragraph
of this proof, gives the desired conclusion.
\end{proof}

A key requirement of our proof is to transcribe an estimate for $\|f^\prime(\zt)\|$ (here $f: \D\to \OM$ is
a complex geodesic) given in terms of
$\kappa_{\OM}(f(\zt); f^\prime(\zt))$\,---\,which is provided by Graham's result\,---\,into an estimate given
in terms of $\zt\in \D$. One such tool is an estimate by Lempert \cite[Proposition~12]{lempert:mKrdb81}.
However, since our domains of interest are not strongly convex, we will need an extension of this
estimate. This has been provided by Mercer, and is as follows:

\begin{result}[Mercer, \cite{mercer:cgihmcd}]\label{R:Mer}
Let $\OM$ be a bounded convex domain in $\C^n$ and let $f: \D\to \OM$ be a complex geodesic.
There exists a constant $\beta > 1$ and constants $C_1, C_2 > 0$ such that
\[
 C_1(1 - |\zt|)\,\leq\,d_{\OM}(f(\zt))\,\leq\,C_2(1 - |\zt|)^{1/\beta} \quad
 \forall \zt\in \D.
\]
\end{result}

We are now in a position to give
\smallskip

\begin{proof}[The proof of Theorem~\ref{T:contEXT}]
Let $f: \D\to \OM$ be a complex geodesic. It is easy to argue that $f$ is a proper map: see
\cite[Proposition~4.6.3]{kobayashi:hcs98}, for instance. Let $K$ be the compact set given by
Lemma~\ref{L:rEST}. By the properness of $f$, there exists a constant $r_0 > 0$ such that
$f(\zt)\in \OM\setminus K$ whenever $1-r_0 < |\zt| < 1$. By Result~\ref{R:kob}
\[
 \|f^\prime(\zt)\|\,\leq\,2r_{\OM}(f(\zt); f^\prime(\zt))\,\kappa_{\OM}
 (f(\zt); f^\prime(\zt))\,=\,\frac{2r_{\OM}(f(\zt); f^\prime(\zt))}{1-|\zt|^2} \quad
 \forall \zt\in \D.
\]
The equality between the second and the third expression is due to the fact that $f$ is a complex geodesic.
For $\zt\in \D$ such that $1-r_0 < |\zt| < 1$, it follows from Lemma~\ref{L:rEST} and the
above inequality that
\[
 \|f^\prime(\zt)\| \ \leq \ 4\crux{d_{\OM}(f(\zt))}\!\!\frac{1}{1-|\zt|^2} \
 \leq \ 4\crux{d_{\OM}(f(\zt))}\!\!\frac{1}{1-|\zt|}.
\]
Finally, we invoke Mercer's estimate, Result~\ref{R:Mer}, to get
\begin{align}
 \|f^\prime(\zt)\| \ &\leq \ 4\CruxII{(1-|\zt|)^{1/\beta}}{C/C_2}\!\!\frac{1}{1-|\zt|} \notag \\
 &\equiv K_1\CruxII{1-|\zt|}{K_2}\!\!\frac{1}{1-|\zt|} \quad \forall \zt: 1-r_0 < |\zt| < 1, \label{E:derivEST}
\end{align}
where $K_1$ and $K_2$ are appropriate positive constants. Define a function $\Phi: [0, r_0)\to [0, +\infty]$ as follows:
\[
 \Phi(x)\,:=\,\begin{cases}
 			\frac{K_1}{x}\CruxII{x}{K_2}, &\text{if $0 < x < r_0$}, \\
 			+\infty, &\text{if $x=0$}.
			\end{cases}
\]
Write $f = (f_1,\dots,f_n)$. By \eqref{E:derivEST}, each component satisfies
\[
 |f_j^\prime(\zt)|\,\leq\,\Phi(1-|\zt|) \quad\forall\zt : 1-r_0 < |\zt| < 1, \;  j=1,\dots,n.
\]
Given Proposition~\ref{P:HL}, the theorem will follow if we can show that
the above $\Phi$ is of class $\leb{1}([0,r_0))$. The convergence of the
integral of $\Phi$ is a standard example; it converges precisely when $0< \alpha< 1$. Thus,
by our assumption on $\alpha$, $f$ extends continuously up to $\bdy\D$.
\end{proof}

\end{document}